\newtheorem*{rep@theorem}{\rep@title}
\newcommand{\newreptheorem}[2]{
\newenvironment{rep#1}[1]{
 \def\rep@title{#2 \ref{##1}}
 \begin{rep@theorem}}
 {\end{rep@theorem}}}
\theoremstyle{plain}
\newtheorem{thm}{Theorem}
\newtheorem{lem}[thm]{Lemma}
\newtheorem*{KernelMagic}{Kernel Magic}
\newtheorem*{MainTheorem}{Main Theorem}
\newtheorem*{TheConjecture}{Conjecture}
\theoremstyle{definition}
\newtheorem*{TheDefinition}{Definition}
\theoremstyle{remark}
\newcommand{\fancy}[1]{\mathcal{#1}}
\newcommand{\C}[1]{\fancy{C}_{#1}}
\newcommand{\IN}{\mathbb{N}}
\renewcommand{\L}{\fancy{L}}
\newcommand{\HH}{\fancy{H}}
\newcommand{\set}[1]{\left\{ #1 \right\}}
\newcommand{\card}[1]{\left|#1\right|}
\newcommand{\size}[1]{\left\Vert#1\right\Vert}
\newcommand{\parens}[1]{\left( #1 \right)}
\newcommand{\DefinedAs}{\mathrel{\mathop:}=}
\newcommand{\mic}{\operatorname{mic}}
\newcommand\restr[2]{{
  \left.\kern-\nulldelimiterspace 
  #1 
  \vphantom{\big|} 
  \right|_{#2} 
  }}
\def\C{\fancy{C}}
\title{A better lower bound on average degree of 4-list-critical graphs}
\author{Landon Rabern}
\begin{document}
\maketitle
\begin{abstract}
		This short note proves that every incomplete $k$-list-critical graph has average degree at least $k-1 + \frac{k-3}{k^2-2k+2}$.  This improves the best known bound for $k = 4,5,6$.
		The same bound holds for online $k$-list-critical graphs.
\end{abstract}

\section{Introduction}
A graph $G$ is \emph{$k$-list-critical} if $G$ is not $(k-1)$-choosable, but every
proper subgraph of $G$ is $(k-1)$-choosable.  For further definitions and notation, see \cite{OreVizing, DischargingLowerBound}. 
Table \ref{TheTable} shows some history of lower bounds on the average degree of $k$-list-critical graphs.

\begin{MainTheorem}
	Every incomplete $k$-list-critical graph has average degree at least \[k-1 + \frac{k-3}{k^2-2k+2}.\]
\end{MainTheorem}

Main Theorem gives a lower bound of $3 + \frac{1}{10}$ for $4$-list-critical graphs. This is the first improvement over Gallai's bound of $3 + \frac{1}{13}$. 
The same proof shows that Main Theorem holds for online $k$-list-critical graphs as well.  Our primary tool is a lemma proved with Kierstead \cite{KernelMagic} 
that generalizes a kernel technique of Kostochka and Yancey \cite{kostochkayancey2012ore}.

\begin{TheDefinition} The \emph{maximum independent cover number }of a graph $G$
	is the maximum $\mic(G)$ of $\size{I, V(G) \setminus I}$ over all independent sets $I$
	of $G$. 
\end{TheDefinition}

\begin{KernelMagic}[Kierstead and R. \cite{KernelMagic}]\label{ConsantListMicStrength} 
	Every $k$-list-critical graph $G$ satisfies
	\[2\size{G} \ge (k-2)\card{G} + \mic(G) + 1.\]
\end{KernelMagic}
The previous best bounds in Table \ref{TheTable} for $k$-list-critical graphs hold for k-Alon-Tarsi-critical graphs as well. Since Kernel Magic relies on the Kernel Lemma, 
our proof does not work for k-Alon-Tarsi-critical graphs.  Any improvement over Gallai's bound of $3 + \frac{1}{13}$ for 4-Alon-Tarsi-critical graphs would be interesting.

\begin{table}
	\begin{center}
		\begin{tabular}{|c|c|c|c|c|c|c|c|c|}
			\hline
			&\multicolumn{4}{ |c| }{$k$-Critical
				$G$}&\multicolumn{4}{|c|}{$k$-List Critical $G$}\\
			\hline
			& Gallai \cite{gallai1963kritische}
			& Kriv \cite{krivelevich1997minimal}
			& KS \cite{kostochkastiebitzedgesincriticalgraph}
			& KY \cite{kostochkayancey2012ore}
			& KS \cite{kostochkastiebitzedgesincriticalgraph} 
			& KR \cite{OreVizing}
			& CR \cite{DischargingLowerBound}
			& Here \\
			$k$ & $d(G) \ge$ & $d(G) \ge$ & $d(G) \ge$ & $d(G) \ge$ & $d(G) \ge$ & $d(G) \ge$ & $d(G) \ge$ & $d(G) \ge$\\
			\hline 
			4 & 3.0769 &3.1429&---&3.3333& --- & --- & --- & \bf{3.1000}\\
			5 & 4.0909 &4.1429&---&4.5000& --- & 4.0984 & 4.1000 & \bf{4.1176}\\
			6 & 5.0909 &5.1304&5.0976&5.6000& --- & 5.1053 & 5.1076 & \bf{5.1153}\\
			7 & 6.0870 &6.1176&6.0990&6.6667& --- & 6.1149 & \bf{6.1192} & 6.1081\\
			8 & 7.0820 &7.1064&7.0980&7.7143& --- & 7.1128 & \bf{7.1167} & 7.1000\\
			9 & 8.0769 &8.0968&8.0959&8.7500& 8.0838 & 8.1094 & \bf{8.1130} & 8.0923\\
			10 & 9.0722 &9.0886&9.0932&9.7778& 9.0793 & 9.1055 & \bf{9.1088} & 9.0853\\
			15 & 14.0541 &14.0618&14.0785&14.8571& 14.0610 & 14.0864 & \bf{14.0884} & 14.0609\\
			20 & 19.0428 &19.0474&19.0666&19.8947& 19.0490 & 19.0719 & \bf{19.0733} & 19.0469 \\
			\hline
		\end{tabular}
	\end{center}
	\caption{History of lower bounds on the average degree $d(G)$ of $k$-critical and $k$-list-critical graphs $G$.}
	\label{TheTable}
\end{table}

\section{The Proof}
The connected graphs in which each block is a complete graph
or an odd cycle are called \emph{Gallai trees}.  Gallai \cite{gallai1963kritische} proved that in a $k$-critical graph, the vertices of degree $k-1$ induce a disjoint union of Gallai trees.  The same is true for $k$-list-critical graphs (\cite{borodin1977criterion, erdos1979choosability}).  For a graph $T$ and $k \in \IN$, let $\beta_k(T)$ be the independence number of the subgraph of $T$ induced on the vertices of degree $k-1$.  When $k$ is defined in the context, put $\beta(T) 
\DefinedAs \beta_k(T)$.  

\begin{lem}\label{BetaaaBound}
	If $k \ge 4$ and $T \ne K_k$ is a Gallai tree with maximum degree at most $k-1$, then
	\[2||T|| \le (k-2)|T| + 2\beta(T).\]
\end{lem}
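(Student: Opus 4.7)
The plan is induction on $\card{T}$. For the base case $\card{T} \le k-1$, every vertex of $T$ has degree at most $\card{T}-1 \le k-2$, so $\beta(T) = 0$ and $2\size{T} \le \card{T}(\card{T}-1) \le (k-2)\card{T}$. If $T$ has a single block and $\card{T} \ge k$, then $T \ne K_k$ together with maximum degree at most $k-1$ forces $T$ to be an odd cycle $C_{2\ell+1}$; every vertex has degree $2$, so $\beta(T) = 0$ and $2(2\ell+1) \le (k-2)(2\ell+1)$ using $k \ge 4$.

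For $T$ with at least two blocks I pick a leaf block $B$ with cut vertex $v$. The first reduction is $T' \DefinedAs T - (V(B) - v)$. Every vertex of $V(B) - v$ has $T$-degree at most $\max(\card{B}-1,\,2) \le k-2$ (using $\card{B} \le k-1$ when $B$ is complete and $k \ge 4$ when $B$ is an odd cycle), so no such vertex is a degree-$(k-1)$ vertex of $T$. Hence $\beta(T) \ge \beta(T')$, and the inductive hypothesis applied to $T'$ reduces the lemma to showing $2\size{B} \le (k-2)(\card{B}-1) + 2(\beta(T)-\beta(T'))$. Direct computation yields $2\size{B} - (k-2)(\card{B}-1) = (m-1)(m-k+2)$ when $B = K_m$ and $2 - 2\ell(k-4)$ when $B = C_{2\ell+1}$; both are non-positive in the sub-cases $m \le k-2$ and $k \ge 5$ respectively, so combined with $\beta(T) \ge \beta(T')$ the induction closes in these sub-cases.

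The remaining sub-cases are $B = K_{k-1}$ and $B = C_{2\ell+1}$ with $k = 4$. In either, $v$'s $B$-degree ($k-2$ or $2$) together with $\deg_T(v) \le k-1$ and the fact that $v$ is a cut vertex force $\deg_T(v) = k-1$ exactly, and force $v$'s unique outside neighbor $u$ to lie in a $K_2$-block $\set{v,u}$. Here I use the sharper reduction $T'' \DefinedAs T - V(B)$, which also deletes $v$; one checks that $T''$ is a Gallai tree with maximum degree at most $k-1$ and $T'' \ne K_k$ (since $T$ contains no $K_k$-block). The analogous bookkeeping reduces the lemma to proving $\beta(T) \ge \beta(T'') + 1$.

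This strict gain in $\beta$ is the main obstacle. The decisive observation is that $v$ is the only vertex of $V(B)$ with $T$-degree $k-1$, for the same degree reasons as above. So $v$'s only $T$-neighbor that could have $T$-degree $k-1$ is $u$; and if $\deg_T(u) = k-1$ then $\deg_{T''}(u) = k-2$, so $u$ is not a degree-$(k-1)$ vertex of $T''$. Either way, $v$ has no $T$-neighbor among the degree-$(k-1)$ vertices of $T''$. Consequently, for any maximum independent set $I$ in the subgraph of $T''$ induced on its degree-$(k-1)$ vertices, $I \cup \set{v}$ is an independent set in the subgraph of $T$ induced on its degree-$(k-1)$ vertices, giving $\beta(T) \ge \beta(T'') + 1$ and closing the induction.
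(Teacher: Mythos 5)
Your proposal is correct and follows essentially the same route as the paper: both delete an endblock $B$ in two ways ($T-(V(B)\setminus\{v\})$ to rule out all blocks except $K_{k-1}$ and, for $k=4$, odd cycles, and then $T-V(B)$ for the surviving cases), and both close the argument with the same key observation that the cutvertex $v$ has degree $k-1$ and can be added to a maximum independent set of degree-$(k-1)$ vertices of $T-V(B)$, giving $\beta(T)\ge\beta(T-V(B))+1$. The only difference is presentational (direct induction versus minimal counterexample), so there is nothing substantive to flag.
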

\begin{proof}
	Suppose the lemma is false and choose a counterexample $T$ minimizing $\card{T}$.  Plainly, $T$ has more than one block.  Let $A$ be an endblock of $T$ and let $x$ be the unique cutvertex of $T$ with $x \in V(A)$.
	Consider $T' \DefinedAs T - (V(A)\setminus\set{x})$.  By minimality of $\card{T}$,
	\begin{equation*}
		2\size{T} - 2\size{A} \le (k-2)(\card{T} + 1 - \card{A}) + 2\beta(T').
	\end{equation*}
		Since $T$ is a counterexample, $2\size{A} > (k-2)(\card{A} - 1)$.  So, if $k > 4$, then $A = K_{k-1}$ and if $k=4$, then $A$ is an odd cycle.  In both cases, $d_T(x) = k-1$.
	Consider $T^* \DefinedAs T - V(A)$.  By minimality of $\card{T}$,
	\begin{equation*}
	2\size{T} - 2\size{A} - 2 \le (k-2)(\card{T} - \card{A}) + 2\beta(T^*).
	\end{equation*}
	Since $T$ is a counterexample, $2\size{A} + 2 > (k-2)\card{A} + 2(\beta(T) - \beta(T^*))$.  In $T^*$, all of $x$'s neighbors have degree at most $k-2$.
	But $d_T(x) = k-1$, so some vertex in $\set{x} \cup N(x)$ is in a maximum independent set of degree $k-1$ vertices in $T$.  Hence $\beta(T^*) \le \beta(T) - 1$, which gives
	\begin{equation*}
	 2\size{A} > (k-2)\card{A},
	\end{equation*}
	a contradiction since $k \ge 4$.
\end{proof}

\begin{proof}[Proof of Main Theorem]
Let $G \ne K_k$ be a $k$-list-critical graph.  The theorem is trivially true if $k \le 3$, so suppose $k \ge 4$. Let $\L \subseteq V(G)$ be the vertices with degree $k-1$ and let $\HH = V(G) \setminus \L$.  Put $\size{\L} \DefinedAs \size{G[\L]}$ and $\size{\HH} \DefinedAs \size{G[\HH]}$.  
By Lemma \ref{BetaaaBound},
	\begin{equation*}
	2\size{\L} \le (k-2)|\L| + 2\beta(\L)
	\end{equation*}
	Hence,
	\begin{align*}
	2\size{G} &= 2\size{\HH} + 2\size{\HH, \L} + 2\size{\L}\\
	&= 2\size{\HH} + 2((k-1)\card{\L} - 2\size{\L}) + 2\size{\L}\\
	&= 2\size{\HH} + 2(k-1)\card{\L} - 2\size{\L}\\
	&\ge 2\size{\HH} + k\card{\L} - 2\beta(\L),\\
	\end{align*}
	which is
	\begin{equation}
	\beta(\L) \ge \size{\HH} + \frac{k}{2}\card{\L} - \size{G}.
	\label{BetaBound}
	\end{equation}
	Let $M$ be the maximum of $\size{I, V(G) \setminus I}$ over all independent sets $I$ of $G$ with $I \subseteq \HH$. Then
	\begin{equation*}
		\mic(G) \ge M + (k-1)\beta(\L).
	\end{equation*}
	Applying Kernel Magic and using (\ref{BetaBound}) gives
	\begin{align*}
	2\size{G} &\ge (k-2)\card{G} + M + (k-1)\beta(\L) + 1\\
	&\ge (k-2)\card{G} + M + (k-1)\parens{\size{\HH} + \frac{k}{2}\card{\L} - \size{G}} + 1\\
	&= (k-2)\card{G} + M + (k-1)\size{\HH} + \frac{k(k-1)}{2}\card{\L} - (k-1)\size{G} + 1.
	\end{align*}		
	Hence
	\begin{equation}
	(k+1)\size{G} \ge (k-2)\card{G} + M + (k-1)\size{\HH} + \frac{k(k-1)}{2}\card{\L} + 1
	\label{KPOBound}
	\end{equation}
	Let $\C$ be the components of $G[\HH]$.  Then $\alpha(C) \ge \frac{\card{C}}{\chi(C)}$ for all $C \in \C$.  Whence
	\begin{equation}
	  M + (k-1)\size{\HH} \ge \sum_{C \in \C} k\frac{\card{C}}{\chi(C)} + (k-1)\size{C}.
	  \label{Mbound}
	\end{equation}
	
	If $\L = \emptyset$, then $G$ has average degree at least $k \ge k-1 + \frac{k-3}{k^2-2k+2}$.  So, assume $\L \ne \emptyset$.  Then $G[\HH]$ is $(k-1)$-colorable by $k$-list-criticality of $G$. In particular, $\chi(C) \le k-1$ for every $C \in \C$.
	For every $C \in \C$,
	\begin{equation}
	 k\frac{\card{C}}{\chi(C)} + (k-1)\size{C} \ge \parens{k - \frac12}\card{C}.
	 \label{KFC}
	\end{equation}
	To see this, first suppose $C \in \C$ is not a tree. Then $\size{C} \ge \card{C}$ and hence $k\frac{\card{C}}{\chi(C)} + (k-1)\size{C} \ge k\frac{\card{C}}{k-1} + (k-1)\card{C} \ge (k - \frac12)\card{C}$.  If $C$ is a tree, then $\chi(C) \le 2$ and hence 
	$k\frac{\card{C}}{\chi(C)} + (k-1)\size{C} \ge k\frac{\card{C}}{2} + (k-1)(\card{C} - 1) \ge (k-\frac12)\card{C}$ unless $\card{C} = 1$.  This proves (\ref{KFC}) since the bound is trivially satisfied when $\card{C} = 1$.
	
	Now combining (\ref{KPOBound}), (\ref{Mbound}) and (\ref{KFC}) with the basic bound
	\begin{equation*}
	  \card{\L} \ge k\card{G} - 2\size{G},
	\end{equation*}	
	gives
	\begin{align*}
	(k+1)\size{G} &\ge (k-2)\card{G} + \parens{k - \frac12}\card{\HH} + \frac{k(k-1)}{2}\card{\L} + 1\\
	&= \parens{2k-\frac52}\card{G} + \frac{k^2 - 3k + 1}{2}\card{\L} + 1\\
	&\ge\parens{2k-\frac52}\card{G} + \frac{k^2 - 3k + 1}{2}\parens{k\card{G} - 2\size{G}} + 1.\\
	\end{align*}
	After some algebra, this becomes
	\begin{equation*}
		2\size{G} \ge \parens{k-1 + \frac{k-3}{k^2 -2k + 2}}\card{G} + \frac{2}{k^2 -2k + 2}.
	\end{equation*}
	That proves the theorem.
\end{proof}

The right side of equation (\ref{KFC}) in the above proof can be improved to $k\card{C}$ unless $C$ is a $K_2$ where both vertices have degree $k$ in $G$.  
If these $K_2$'s could be handled, the average degree bound would improve to $k-1 + \frac{k-3}{(k-1)^2}$.  

\begin{TheConjecture}
Every incomplete (online) $k$-list-critical graph has average degree at least \[k-1 + \frac{k-3}{(k-1)^2}.\]
\end{TheConjecture}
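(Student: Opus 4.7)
The plan is to follow the proof of Main Theorem, tightening the bound in (\ref{KFC}) to $k\card{C}$ wherever possible. Call a component $C$ of $G[\HH]$ a \emph{bad $K_2$} if $C = K_2$ and both its endpoints have degree exactly $k$ in $G$. A direct case check shows $k\card{C}/\chi(C) + (k-1)\size{C} \ge k\card{C}$ unless $C$ is a bad $K_2$: for $C$ not a tree, $\size{C} \ge \card{C}$ and $\chi(C) \le k-1$ give $k\card{C}/(k-1) + (k-1)\card{C} \ge k\card{C}$; for $C$ a tree with $\card{C} \ge 3$, $k\card{C}/2 + (k-1)(\card{C}-1) \ge k\card{C}$ since $\card{C} \ge 2(k-1)/(k-2)$ for $k \ge 4$; and for $C = \set{u,v}$ with $\max\set{d_G(u), d_G(v)} \ge k+1$, placing the higher-degree endpoint into the independent set used for $M$ contributes $(k+1)+(k-1) = k\card{C}$. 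Let $b$ denote the number of bad $K_2$'s, so (\ref{Mbound}) is sharpened to $M + (k-1)\size{\HH} \ge k\card{\HH} - b$.

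Carrying this sharpened bound through the Main Theorem's algebra yields
\[
2(k-1)^2\size{G} \;\ge\; (k^3 - 3k^2 + 4k - 4)\card{G} - 2b + 2,
\]
which matches the target $2\size{G}/\card{G} \ge k-1 + (k-3)/(k-1)^2$ when $b \le 1$ and otherwise leaves a deficit of $2b - 2$. The entire conjecture thus reduces to absorbing this deficit; the natural formalization is the enhanced kernel bound
\[
\mic(G) \;\ge\; M + (k-1)\beta(\L) + b.
\]

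The route I would take begins with the following structural observation: every vertex counted by $\beta(\L)$ has $T$-degree exactly $k-1$ in its Gallai tree, and therefore has no neighbor in $\HH$ at all. Consequently, for any maximizer $I_\L$ of $\beta(\L)$ and any independent $I_\HH \subseteq \HH$, the union $I = I_\L \cup I_\HH$ is automatically independent, and $\size{I, V(G) \setminus I} = (k-1)\beta(\L) + \sum_{v \in I_\HH} d_G(v)$, recovering $M + (k-1)\beta(\L)$ with room to spare at bad $K_2$'s. The plan for the bonus is, for each bad $K_2$ $\set{u_i, v_i}$, to augment $I$ by a helper $w_i \in N(v_i) \cap \L$ chosen so that $w_i \notin N(u_i)$, $w_i$ has no neighbor in $I_\L$, and the $w_i$'s are pairwise non-adjacent; each such $w_i$ contributes $d_G(w_i) = k-1 \ge 1$ to the boundary count, easily covering the debit.

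The hard part is the simultaneous existence of the helpers $\set{w_i}$. Three failure modes must be handled: $N(v_i) \cap \L \subseteq N(u_i)$ (no helper candidate at all, which forces a degenerate structure $N_\L(u_i) = N_\L(v_i)$ for that bad $K_2$); every candidate $w_i$ is dominated by $I_\L$; or helpers of distinct bad $K_2$'s clash. A robust resolution likely requires selecting $I_\L$, $I_\HH$, and the helpers jointly via a kernel-type argument --- essentially an enhancement of the Kernel Magic construction that builds the helpers into the kernel. Proving such a joint selection exists in all cases, and verifying that the argument still goes through in the online setting (where Kernel Magic itself applies), is the crux of the conjecture.
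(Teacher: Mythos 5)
This statement is stated in the paper as a \emph{conjecture}, not a theorem: the paper's own closing remark observes exactly the reduction you describe --- that the right side of (\ref{KFC}) can be improved to $k\card{C}$ except when $C$ is a $K_2$ whose two vertices both have degree $k$ in $G$, and that handling these $K_2$'s would yield the bound $k-1+\frac{k-3}{(k-1)^2}$ --- and then leaves the problem open. Your proposal reproduces that reduction correctly (the case analysis sharpening (\ref{KFC}) to $k\card{C}$ for non-trees, for trees on at least $3$ vertices, for isolated vertices, and for $K_2$'s with an endpoint of degree at least $k+1$ all checks out, and the resulting deficit of one unit per bad $K_2$ is right), but it does not prove the conjecture.

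The genuine gap is the one you yourself flag as ``the crux'': the simultaneous existence of the helper vertices $w_i$, equivalently the enhanced bound $\mic(G) \ge M + (k-1)\beta(\L) + b$. None of the three failure modes you list is actually resolved. In particular, when $N(v_i)\cap\L \subseteq N(u_i)$ there is no candidate helper at all, and your parenthetical ``which forces a degenerate structure'' is not an argument --- you would still need to extract the missing unit of $\mic$ (or of something else in the chain of inequalities) from that structure. Likewise, a candidate $w_i$ adjacent to a vertex of $I_\L$ cannot simply be swapped in: removing a vertex of $I_\L$ costs $k-1$ from the boundary count, exactly what $w_i$ contributes, so the exchange is a wash and the deficit survives. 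A ``joint kernel-type selection'' is named but not constructed, and Kernel Magic as stated gives only $\mic(G)$, with no mechanism for forcing the maximizing independent set to contain designated low vertices. Until one of these steps is actually carried out, what you have is a correct restatement of why the conjecture is plausible --- which is precisely the content of the paper's final paragraph --- rather than a proof of it.
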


\bibliographystyle{amsplain}
\bibliography{GraphColoring1}
\end{document}